\newcommand{\inj}{\operatorname{inj}}
\newcommand{\ind}{\operatorname{ind}}
\newcommand{\tree}{\operatorname{tree}}
\newtheorem{theorem}{Theorem}
\newtheorem{lemma}[theorem]{Lemma}
\newtheorem{proposition}{Proposition}
\newtheorem{corollary}[theorem]{Corollary}
\newtheorem{definition}{Definition}
\def\qed{\ifvmode\mbox{ }\else\unskip\fi\hskip 1em plus 10fill$\Box$}
\def\Ddots{\mathinner{\mkern1mu\raise\p@
\vbox{\kern7\p@\hbox{.}}\mkern2mu
\raise4\p@\hbox{.}\mkern2mu\raise7\p@\hbox{.}\mkern1mu}}
\title{A short proof of the equivalence of left and right convergence for sparse graphs}
\author{L\'aszl\'o Mikl\'os Lov\'asz\thanks{Department of Mathematics,
    Massachusetts Institute of Technology,
    Cambridge, MA 02139-4307.
    Email: {\tt lmlovasz@math.mit.edu}. This work was partially done during the author's visit to the group of Dan Kr\'al' at the University of Warwick; the visit was partially supported by the European Research Council under the European Union's Seventh Framework Programme (FP7/2007-2013)/ERC grant agreement no.~259385. Another part of this
work was done with the support of Jacob Fox's NSF CAREER award DMS
1352121}
}
\date{\today}
\begin{document}
\maketitle

\begin{abstract}
There are several notions of convergence for sequences of bounded degree graphs. One such notion is left convergence, which is based on counting neighborhood distributions. Another notion is right convergence, based on counting homomorphisms to a target (weighted) graph. Borgs, Chayes, Kahn and Lov\'asz showed that a sequence of bounded degree graphs is left convergent if and only if it is right convergent for certain target graphs $H$ with all weights (including loops) close to $1$. We give a short alternative proof of this statement. In particular, for each bounded degree graph $G$ we associate functions $f_{G,k}$ for every positive integer $k$, and we show that left convergence of a sequence of graphs is equivalent to the convergence of the partial derivatives of each of these functions at the origin, while right convergence is equivalent to pointwise convergence. Using the bound on the maximum degree of the graphs, we can uniformly bound the partial derivatives at the origin, and show that the Taylor series converges uniformly on a domain independent of the graph, which implies the equivalence.
\end{abstract}

\section{Introduction}

The theory of graph limits has been extensively developing in recent years, primarily in the dense case \cite{BCLSV1,BCLSV2}. For sequences of sparse graphs, in particular, sequences of graphs with bounded degree, much less is known. A notion of convergence, which we will refer to as left convergence, was first defined by Benjamini and Schramm \cite{BeSch}. One of the major open problems on bounded degree graph sequences, the conjecture of Aldous and Lyons \cite{AL} on left convergent sequences of graphs, is very closely related to Gromov's question about whether all countable discrete groups are sofic \cite{GROM}. In a certain sense, left convergence is too rough to ``distinguish'' graphs which should be different. Because of this, other, finer notions of convergence for sequences of bounded degree graphs \cite{BolRio,BCG,HaLoSze} have been proposed. The full picture of their mutual relations is not completely understood. Borgs, Chayes, Kahn and Lov\'asz \cite{BCLK} prove that a sequence of bounded degree graphs is left convergent if and only if it is ``right convergent'' for a certain set of target graphs. We provide an alternative, shorter proof of this statement, which we believe also gives some new and useful insights.

Here and throughout this paper we use the following setting: $G_n=(V(G_n),E(G_n))$, is a sequence of graphs with degrees bounded uniformly by a constant $D$, and $v(G_n)=|V(G_n)| \rightarrow \infty$. Given two simple graphs $G$ and $H$, we let $\hom(G,H)$ denote the number of maps $V(G) \rightarrow V(H)$ such that any edge of $G$ is mapped to an edge of $H$. We define $\inj(G,H)$ as the number of maps that are injective on the set of vertices, and $\ind(G,H)$ as the number of injective maps that are isomorphisms between $G$ and the subgraph of $H$ induced by the images of the vertices in $G$.

Suppose now that $H$ is a weighted graph, with vertex weights $w_i$ and edge weights $w_{ij}$. We think of each pair of vertices as having a weight, perhaps equal to $0$. We define $\hom(G,H)$ as the sum over all maps $f: V(G) \rightarrow V(H)$ of the product 
\[\prod_{v \in G} w_{f(v)} \prod_{(uv) \in E(G)} w_{f(u)f(v)}. \]

Note that if all vertices in $H$ have weight $1$ and all edges have weight $0$ or $1$, then $\hom(G,H)$ is equal to $\hom(G,H')$ where $H'$ is the unweighted graph on the vertex set of $H$ formed by the edges with weight $1$.
We also define $t(G,H)$ as the \emph{average} of this product over all maps, this simply divides it by a factor of $v(H)^{v(G)}$.

One important notion of convergence is the following, defined by Benjamini and Schramm \cite{BeSch}:

\begin{definition}[Left convergence]
Given a sequence of graphs $(G_n)$ with all degrees bounded by a positive integer $D$, we say that the sequence is \textit{left convergent} if for any connected graph $F$, the limit 
\[\lim_{n \rightarrow \infty} \frac{\ind(F,G_n)}{v(G_n)}\]
exists. It is well known that we obtain an equivalent definition if we replace $\ind$ with $\inj$ or $\hom$.
\end{definition}

Note that the original definition by Benjamini and Schramm involved looking at the distribution of the $r$-neighborhoods of vertices for any $r$, but it is easy to see that the two notions are equivalent. 

We also examine the notion of right convergence:

\begin{definition}[Right convergence]
Given a sequence of graphs $(G_n)$ with bounded degrees, we say that the sequence is \textit{right convergent with soft-core constraints}, or simply soft-core right convergent, if for any weighted target graph $H$ that is complete and has positive weights (including on loops), the limit 
\[\lim_{n \rightarrow \infty} \frac{\log\hom(G_n,H)}{v(G_n)}\] 
exists. If the limit also exists for all graphs $H$ with nonnegative edge weights, we will refer to it as right convergence with hard-core constraints, or hard-core right convergence.

\end{definition}

If we consider $t(G_n,H)$ instead of $\hom(G_n,H)$, we obtain an equivalent definition, this simply decreases each term in the sequence by the same constant, $\log v(H)$. Given a sequence $G_n$ of bounded degree graphs, if for each $n$ we add or delete $o(v(G_n))$ edges in $G_n$, neither soft-core right convergence, nor left convergence is affected. However, hard-core right convergence can change. Borgs, Chayes, and Gamarnik \cite{BCG} show that if a sequence is soft-core right convergent, then one can delete $o(v(G_n))$ edges from $G_n$ to make it hard-core right convergent. In particular, this implies that if every hard-core right convergent sequence is left convergent, then every soft-core right convergent sequence is left convergent.

Borgs, Chayes, Kahn and Lov\'asz \cite{BCLK} proved that hard-core right convergence implies left convergence, and left convergence implies right convergence for a subset of target graphs $H$, where the set depends on $D$, the bound on the maximum degree.
Their proof considers target graphs $H$ with zero weights. The proof in this note only considers target graphs with positive weights, and thus gives a more direct proof that (soft-core) right convergence implies left convergence. 
Specifically, we provide a new short proof of the following theorem:

\begin{theorem} \label{main}
For any sequence $(G_n)$ of graphs with all degrees bounded by $D$, the following are equivalent:
\begin{enumerate}
\item $(G_n)$ is left convergent.
\item For every target graph $H$ with each vertex and edge weight between $e^{-(4eD)^{-1}}$ and $e^{(4eD)^{-1}}$, the sequence $\frac{1}{v(G_n)}\log t(G,H)$ is convergent.
\item For each $k$, there exists an $\epsilon_k$ such that for any target graph $H$ on $k$ vertices with each weight between $e^{-\epsilon_k}$ and $e^{\epsilon_k}$, the sequence $\frac{1}{v(G_n)	}\log t(G,H)$ is convergent.
\end{enumerate}
\end{theorem}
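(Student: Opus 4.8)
\medskip\noindent\textbf{Proof strategy.}
The plan is to attach to each graph $G$ of maximum degree at most $D$ and each $k\ge 1$ the function
$$f_{G,k}(\mathbf z)\;=\;\frac{1}{v(G)}\,\log t\!\left(G,H_{\mathbf z}\right),$$
where $H_{\mathbf z}$ is the complete weighted graph on $[k]$ with vertex weights $w_i=e^{z_i}$ and edge weights (loops included) $w_{ij}=e^{z_{ij}}$, and $\mathbf z=(z_i,z_{ij})$ ranges over a small box about the origin. Since all weights are $1$ at $\mathbf z=0$ we have $f_{G,k}(0)=0$, and after the substitution $w=e^z$ statement (2) says exactly that $f_{G_n,k}(\mathbf z)$ converges for every $k$ and every real $\mathbf z$ with $\|\mathbf z\|_\infty\le(4eD)^{-1}$, while (3) says the same on a box of radius $\epsilon_k$ allowed to depend on $k$. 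Thus (2)$\Rightarrow$(3) is immediate, and the whole theorem reduces to the implications (1)$\Rightarrow$(2) and (3)$\Rightarrow$(1); both follow once we (a) identify the Taylor coefficients of $f_{G,k}$ at the origin and (b) bound them uniformly in $G$ and $k$.

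For (a), write $t(G,H_{\mathbf z})=\mathbb E_f\!\big[\exp\!\big(\sum_{v\in V(G)}z_{f(v)}+\sum_{(uv)\in E(G)}z_{f(u)f(v)}\big)\big]$ with $f$ uniform on $[k]^{V(G)}$, so that $v(G)\,f_{G,k}$ is the joint cumulant generating function of the statistics $\#\{e\in E(G):f\text{ maps }e\text{ to }\{i,j\}\}$ and $\#\{u:f(u)=i\}$; equivalently, substituting $w_i=1+a_i$, $w_{ij}=1+b_{ij}$ and multiplying out expresses $t(G,H_{\mathbf z})$ as a sum over families of vertex-disjoint connected ``marked subgraphs'' of $G$ and $\log t(G,H_{\mathbf z})$ as a sum over connected clusters of these. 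Either description yields the structural fact we need: the coefficient of any fixed monomial in $f_{G,k}$ is a fixed linear combination — with coefficients depending only on $k$ and on the monomial, never on $G$ — of the normalized subgraph counts $\ind(F,G)/v(G)$ over connected graphs $F$ having at most as many edges as the degree of the monomial. Hence left convergence of $(G_n)$ forces every Taylor coefficient of every $f_{G_n,k}$ to converge; conversely, as over all $k$ and all monomials these combinations form a triangular system with nonzero diagonal — the first-order coefficients already recover the edge density, and one climbs the natural partial order on connected graphs — it can be inverted to express each $\ind(F,G)/v(G)$ in terms of finitely many of those coefficients, so convergence of all Taylor coefficients of all the $f_{G_n,k}$ in turn forces left convergence.

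Step (b) is the analytic core, and the step I expect to be the main obstacle. The naive hope that $t(G,H_{\mathbf z})$ stays near $1$ fails — that would need $\|\mathbf z\|_\infty\lesssim 1/v(G)$ — so one must instead resum $\log t$ via the cluster (Mayer) expansion of the polymer model above and show this expansion converges absolutely on a box of radius $(4eD)^{-1}$ that is independent of $G$ \emph{and} of $k$. Here the degree bound is essential: a marked connected subgraph (polymer) $\gamma$ of size $s$ has weight at most $\delta^{s}$ with $\delta=e^{(4eD)^{-1}}-1$ and no dependence on $k$ (the normalization $k^{-|V(\gamma)|}$ cancels the $k^{|V(\gamma)|}$ maps), while the number of connected subgraphs of size $s$ through a fixed vertex of a graph of maximum degree $D$ is at most $(eD)^{s}$ up to lower-order factors. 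Feeding these into the standard cluster-expansion convergence criterion shows that $\log t(G,H_{\mathbf z})$ is $O(v(G))$ and analytic on $\|\mathbf z\|_\infty<(4eD)^{-1}$, with the Taylor series of $f_{G,k}$ at the origin converging there in the strong sense that the $\ell^1$-norm of its homogeneous degree-$d$ part decays geometrically at a rate independent of $G$ and $k$. The delicate points — and where the factor $4e$ gets pinned down — are to track the constants honestly when combining $e^{\epsilon}-1\approx\epsilon$, the $(eD)^{s}$ subgraph count, and the polymer weights, and to ensure that the combinatorial coefficients produced by the expansion (equivalently the Möbius/cumulant coefficients) do not contribute a spurious factorial that would collapse the radius of convergence.

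Granting (a) and (b), the theorem follows from elementary facts about power series. For (1)$\Rightarrow$(2): the Taylor coefficients of $f_{G_n,k}$ at $0$ converge by (a), and by (b) the degree-$d$ blocks have $\ell^1$-norm decaying geometrically, uniformly in $n$ and $k$; hence on $\|\mathbf z\|_\infty\le(4eD)^{-1}$ (using the room to spare in the bound) the tails of the series $f_{G_n,k}(\mathbf z)=\sum_\alpha c^{(n)}_\alpha\mathbf z^\alpha$ are uniformly small while the remaining partial sums converge coefficient by coefficient, so $f_{G_n,k}(\mathbf z)$ converges, which is (2). For (3)$\Rightarrow$(1): on the box of radius $\min(\epsilon_k,(4eD)^{-1})$ the functions $f_{G_n,k}$ are analytic with uniformly summable Taylor series and converge pointwise by (3); since each Taylor coefficient is then recovered from finitely many function values with uniformly small error (a finite-difference, or Cauchy-integral, estimate), pointwise convergence of the functions forces convergence of all the coefficients, and then the converse half of (a) gives left convergence. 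Together with the trivial (2)$\Rightarrow$(3), this closes the cycle.
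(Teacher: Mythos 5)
Your overall architecture is exactly the paper's: introduce $f_{G,k}(\mathbf z)=\frac{1}{v(G)}\log t(G,H_{\mathbf z})$, show its Taylor coefficients at the origin are a $G$-independent linear combination of normalized connected-subgraph counts that can be inverted by a triangular-system argument (the paper's Proposition 2, with the caveat that at order $l$ one needs $k\ge 2l$ colors for injectivity, which your ``over all $k$'' quantifier covers), bound the coefficients uniformly in $G$ to get a $G$-independent domain of uniform convergence, and then close the cycle with standard power-series facts — your finite-difference recovery of coefficients in (3)$\Rightarrow$(1) is the same as the paper's $o(\|\lambda\|_\infty^k)$ remainder argument. The one substantive divergence is the tool for step (b), which you correctly identify as the crux. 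You propose a Mayer/cluster expansion of the hard-core polymer model obtained by writing $w=1+a$; this is viable (the polymer weight per edge is $O(\delta)$ after the $k^{-|V(\gamma)|}$ cancellation you note, and the $(eD)^s$ count of connected subgraphs through a vertex beats it for $\delta<(4eD)^{-1}$), but you leave the convergence criterion, the Ursell-coefficient control, and the exact constant $4e$ unverified — and the specific radius matters for statement (2) as written. The paper sidesteps all of this by observing that $v(G)f_{G,k}(z\lambda_0)$ is the cumulant generating function of a sum $\sum_\alpha Y_\alpha$ of bounded variables indexed by $V(G)\cup E(G)$ whose dependency graph has maximum degree $2D$, and invoking an off-the-shelf bound $|\kappa_r(Y)|\le 2^{r-1}r^{r-2}|W|(\Delta+1)^{r-1}A^r$ (Theorem 9.3 of F\'eray--M\'eliot--Nikeghbali), from which Stirling gives the radius $1/(4eD)$ in three lines; the ``spurious factorial'' you worry about is exactly what the $r^{r-2}$ tree count versus $r!$ resolves. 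So: right strategy, correct skeleton, but the analytic core is a plan rather than a proof, and the dependency-graph cumulant bound is the cleaner way to execute it.
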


We will now introduce an alternative way of looking at right convergence that provides a more natural setting for the proof.

\begin{definition}
Given a graph $G$, and a positive integer $k$, we define $X(G,k) \in \mathbb{R}^{k+k^2}$ as follows. Take a random coloring of $V(G)$ with $k$ colors. For a fixed coloring $C$, we define $X(G,C)_i$ as the proportion of vertices with color $i$. That is, we take the number of vertices with color $i$ and divide it by $v(G)$. We define $X(G,C)_{i,j}$ as the number of edges between color class $i$ and $j$, divided by $v(G)$. 
Note that $X(G,C)_{i,j}=X(G,C)_{j,i}$. We now define the random variables $X_i=X(G,k)_i$ and $X_{i,j}=X(G,k)_{i,j}$ 
as the values of $X(G,C)$ where the coloring is uniformly random among all 
$k^v(G)$-colorings of the vertices. 
Note that $X \in [0,1]^k \times [0,D]^{k^2}$, where $D$ is the bound on the degree.
\end{definition}

We will examine the cumulant generating function of this variable.

\begin{definition}\label{fdefi}
Given a positive integer $k$ and $\lambda \in \mathbb{R}^{k+k^2}$, we define $f_{G,k}(\lambda)$ as the (normalized) cumulant generating function 
\[f_{G,k}(\lambda)=\frac{1}{v(G)} \log \mathbb{E}(e^{\langle \lambda,v(G) X(G,k) \rangle}).\]
\end{definition}

For $\lambda \in \mathbb{R}^{k+k^2}$, define the weighted graph $H_\lambda$ on the vertex set $[k]$ with vertex $i$ having weight $e^{\lambda_i}$ and edge $ij$ having weight $e^{\frac{1}{2}(\lambda_{ij}+\lambda_{ji})}$. Then it is not difficult to see that
\[f_{G,k}(\lambda)=\frac{1}{v(G)}\log t(G,H_\lambda).\]
Thus, given a graph sequence $G_n$, the pointwise convergence of $f_{G_n,k}(\lambda)$ for $\lambda \in S$ for some $S \subset \mathbb{R}^{k+k^2}$ is equivalent to right convergence for all target graphs $H_\lambda$ for $\lambda \in S$.

With this notation, Theorem \ref{main} is equivalent to the following theorem.

\begin{theorem} \label{main2}
For any sequence $(G_n)$ of graphs with all degrees bounded by $D$, the following are equivalent:
\begin{enumerate}
\item $(G_n)$ is left convergent.
\item For every $\| \lambda \|_\infty <\frac{1}{4eD}$, the sequence $f_{G_n,k}(\lambda)$ is convergent.
\item For every $k$, there exists an open neighborhood $S_k$ of the origin in $\mathbb{R}^{k+k^2}$ such that for any $\lambda \in S_k$, the sequence $f_{G_n,k}(\lambda)$ is convergent.
\end{enumerate}
\end{theorem}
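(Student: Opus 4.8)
The plan is to build a dictionary between the combinatorial quantities that govern left convergence and the local Taylor data of the functions $f_{G,k}$ at the origin, and then to use the degree bound $D$ to make every relevant estimate uniform in $G$, after which a soft interchange-of-limits argument closes the loop. Since $(2)\Rightarrow(3)$ is trivial, only $(1)\Rightarrow(2)$ and $(3)\Rightarrow(1)$ require work. For the dictionary, write $Y=v(G)X(G,k)$, so that $f_{G,k}=\frac{1}{v(G)}K_Y$ where $K_Y$ is the cumulant generating function of the random vector $Y$; then $\partial^\alpha f_{G,k}(0)=\frac{1}{v(G)}\kappa_\alpha(Y)$ is a normalized joint cumulant of the coordinates of $Y$. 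Each coordinate of $Y$ is a sum, over the vertices or edges of $G$, of a function of the colors at that location; since in a uniformly random coloring the colors at distinct vertices are independent, and a joint cumulant vanishes whenever its arguments split into two mutually independent groups, multilinearity of cumulants expands $\kappa_\alpha(Y)$ as a sum, over assignments of the $|\alpha|$ ``slots'' to locations of $G$ whose union forms a connected subgraph (necessarily on at most $2|\alpha|$ vertices), of local cumulants. Grouping by isomorphism type exhibits $\partial^\alpha f_{G,k}(0)$ as a finite linear combination, with coefficients depending only on $k$ and $\alpha$, of densities $\inj(F,G)/v(G)$ over connected graphs $F$.

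Conversely, taking $k=|V(F)|$, a bijection $V(F)\to[k]$, and one derivative in each variable $\lambda_{ij}$ with $ij\in E(F)$, the distinct colors force the shape of $F$, and a Möbius (moment–cumulant) inversion, inducting on $|V(F)|$, recovers $\inj(F,G)/v(G)$ from the $\partial^\alpha f_{G,k}(0)$. Since left convergence is equivalent to the convergence of $\inj(F,G_n)/v(G_n)$ for every connected $F$, the two directions together give the key equivalence: $(G_n)$ is left convergent if and only if $\partial^\alpha f_{G_n,k}(0)$ converges for every $k$ and every multi-index $\alpha$.

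The next step is the uniform analyticity, which is where the constant $\frac{1}{4eD}$ enters. Fix $t=\|\lambda\|_\infty<\frac{1}{4eD}$. Writing each edge factor $e^{b_e}$ as $1+w_e$ with $|w_e|\le e^t-1$, expanding $\prod_e(1+w_e)$, and factoring the expectation over connected components (again using independence of the colors) yields, via the exponential/polymer formula, a cluster expansion $f_{G,k}(\lambda)=\log\mu(\lambda)+\frac{1}{v(G)}\sum_{\mathbf C}\Phi(\mathbf C)$ over clusters $\mathbf C$ of connected subgraphs of $G$, where $\mu(\lambda)=\frac1k\sum_i e^{\lambda_i}$ is $G$-independent. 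Combining the standard bound that a graph of maximum degree $D$ has at most $(eD)^m$ connected subgraphs with $m$ edges through a fixed vertex with the Kotecký–Preiss convergence criterion, this series converges absolutely and uniformly over all $G$ of maximum degree $D$ and all $k$ provided $t<\frac{1}{4eD}$. Hence $f_{G,k}$ extends holomorphically to the complex polydisc $\Delta=\{\|\lambda\|_\infty<\frac{1}{4eD}\}$, is bounded there by a constant depending only on $D$, and equals its Taylor series at the origin, the part of degree $>M$ of which is at most some $\varepsilon_M\to 0$, uniformly in $G$ and $k$.

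Finally I would assemble the two implications. For $(1)\Rightarrow(2)$: given left convergence, fix $k$ and $\|\lambda\|_\infty<\frac{1}{4eD}$; truncating the Taylor series of $f_{G_n,k}$ at degree $M$ leaves a tail of size $\le\varepsilon_M$ uniformly in $n$, while the finite part is a fixed linear combination of the coefficients $\frac{1}{\alpha!}\partial^\alpha f_{G_n,k}(0)$, which converge by the dictionary; letting $M\to\infty$ shows $f_{G_n,k}(\lambda)$ is Cauchy. For $(3)\Rightarrow(1)$: the complexified functions $f_{G_n,k}$ are holomorphic and uniformly (in $n$) bounded on $\Delta$ and converge pointwise on the nonempty open set $S_k\cap\Delta$, which contains a real box $(-r,r)^{k+k^2}$; applying the one-variable Vitali theorem one coordinate at a time gives locally uniform convergence on all of $\Delta$ to a holomorphic limit, so every Taylor coefficient $\partial^\alpha f_{G_n,k}(0)$ converges, and the dictionary delivers left convergence. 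The main obstacle is the middle step: turning the soft-core structure into a genuinely convergent polymer expansion with the explicit radius $\frac{1}{4eD}$, which requires the $(eD)^m$ subgraph bound together with a careful application of a cluster-expansion criterion; the combinatorial bookkeeping of the dictionary (especially the converse direction, isolating each connected $F$) is routine but fiddly, and the final assembly is soft.
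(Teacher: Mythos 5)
Your overall architecture is the same as the paper's: a dictionary identifying the partial derivatives of $f_{G,k}$ at the origin with $G$-independent linear combinations of connected-subgraph densities (via joint cumulants, multilinearity, and the vanishing of cumulants for independent groups), an inverse to that dictionary so that left convergence is equivalent to convergence of all derivatives at $0$, a uniform analyticity statement on $\|\lambda\|_\infty<1/(4eD)$, and a soft assembly (uniform Taylor tails in one direction, Vitali in the other; the paper instead argues directly that pointwise convergence on a neighborhood forces convergence of the derivatives, using the uniform $o(\|\lambda\|_\infty^k)$ tail bound). Your inverse dictionary via induction on the number of vertices is essentially the paper's triangular-matrix argument (its matrix $E$ is lower triangular with nonzero diagonal when graphs are ordered by number of non-isolated vertices), and your remark that ``distinct colors force the shape of $F$'' needs exactly that correction: tuples of edges of $G$ forming a connected graph with \emph{more} vertices that maps label-preservingly onto $F$ also contribute, which is what the induction/triangularity absorbs. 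So the dictionary and the assembly are fine and parallel to the paper.

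The genuine gap is in the middle step, where you diverge from the paper. Your cluster expansion, with the ingredients you cite, does not reach the radius $1/(4eD)$ demanded by statement (2). First, the ``standard'' bound of $(eD)^m$ is for subtrees, or for connected subgraphs counted by \emph{vertices}, through a fixed vertex; for connected subgraphs counted by \emph{edges} (your polymers) the easy bound is weaker (e.g., passing to the line graph, whose maximum degree is about $2D$, gives roughly $(2eD)^m$). Second, each edge of $G$ is weighted by \emph{both} $\lambda_{ij}$ and $\lambda_{ji}$ (since $X_{i,j}=X_{j,i}$ are separate coordinates), so the per-edge deviation is $e^{2t}-1$, not $e^{t}-1$, at $\|\lambda\|_\infty=t$. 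Third, Kotecký--Preiss requires strict room for the extra factor $e^{a|\gamma|}$. Multiplying these out, at $t$ near $1/(4eD)$ the relevant series is at or beyond the convergence threshold, so your argument only certifies some strictly smaller radius; that still gives the equivalence of (1) and (3), but it does not prove (1)$\Rightarrow$(2) with the explicit constant in the theorem. The paper gets the constant in one stroke by a different device: it writes $\langle\lambda_0,v(G)X\rangle=\sum_\alpha Y_\alpha$ over $W=V(G)\cup E(G)$, observes this family has a dependency graph of maximum degree at most $2D$, and applies the cumulant bound of F\'eray--M\'eliot--Nikeghbali, $|\kappa_r(Y)|\le 2^{r-1}r^{r-2}|W|(2D+1)^{r-1}A^r$, which bounds every Taylor coefficient uniformly in $G$ and yields the radius $1/(4eD)$ (and the uniform tail estimate) directly. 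To repair your write-up you would either need to redo the polymer estimates with sharper animal counts and optimized KP parameters to actually hit $1/(4eD)$, or replace that step by a dependency-graph cumulant bound of this type.
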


Note that trivially $(2) \Rightarrow (3)$. Our proof strategy is the following. We show that for a fixed graph $G$ the functions $f_{G,k}$ are analytic around zero, and for a fixed $k$ the Taylor series around the origin converges uniformly on the region $\|\lambda\|_\infty < (4eD)^{-1}$. Then we show that left convergence is equivalent to the convergence of the partial derivatives of $f_{G_n,k}$ at the origin for all $k$ and all partial derivatives. The equivalence then follows from standard analysis arguments.

%\section{A bound on the joint cumulant}
%\todo{remove proof? just reference}
%In this section, we prove a bound on the joint cumulant of random variables, provided we have some information on their independences. 
In order to analyze the partial derivatives, we use the notion of the joint cumulant of random variables. If we have $l$ random variables $Z_1,Z_2,...,Z_l$, their joint cumulant $\kappa(Z_1,...,Z_l)$ is defined as follows. Let $g(\lambda_1,...,\lambda_l)=\log \mathbb{E}(\exp(\sum_{i=1}^l \lambda_i Z_i))$ and take the partial derivative of $g$ once according to each variable, at the origin. In particular, $\kappa(X)$ is the expected value $\mathbb{E}(X)$ and $\kappa(X,Y)$ is the covariance $\sigma(X,Y)=\mathbb{E}(XY)-\mathbb{E}(X)\mathbb{E}(Y)$. The mapping $\kappa$ is a multilinear function of the variables, and if we let $\pi$ run over the partitions of the set $[l]$, we have the formula 
\begin{equation} \label{cumulexpr}
\kappa(Z_1,Z_2,...,Z_l)=\sum_\pi(|\pi|-1)!(-1)^{|\pi|-1}\prod_{B \in \pi}\mathbb{E}\left(\prod_{i \in B}Z_i\right)
\end{equation}

We also define the $r$-th cumulant $\kappa_r(Z)=\kappa(Z,Z,...,Z)$ where $Z$ appears $r$ times.%, this is equal to the $r$-th derivative at $0$ of the function 

\section{Analyticity and the domain of convergence}

In this section, we will show that for each $k$, and for each graph $G$ with maximum degree at most $D$, the Taylor series of $f_{G,k}$ converges on an open neighborhood of the origin, uniformly for fixed $D$.

\begin{proposition} \label{convdom}
Suppose $G$ is a graph with maximum degree at most $D$. Then $f_{G,k}$ is analytic around $0$, and at any point in the domain $\| \lambda \|_\infty < (4eD)^{-1}$, or even on the set $\|\lambda\|_\infty \le c$ for any $c<(4eD)^{-1}$, the Taylor series centered at the origin converges uniformly across all graphs with maximum degree $D$.
\end{proposition}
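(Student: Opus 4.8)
The plan is to bound the Taylor coefficients of $f_{G,k}$ at the origin, which are exactly the joint cumulants of the coordinates of $v(G)X(G,k)$, normalized by $v(G)$. Writing $N = k+k^2$ for the number of coordinates and indexing them by $\alpha \in [N]$, the coefficient of the monomial $\prod_\alpha \lambda_\alpha^{r_\alpha}$ (with $r = \sum_\alpha r_\alpha$) in the Taylor series is $\frac{1}{r!}\binom{r}{(r_\alpha)}$ times $\frac{1}{v(G)}\kappa\big((v(G)X_\alpha)^{(r_\alpha)}\big)$, the joint cumulant where the variable $v(G)X_\alpha$ is repeated $r_\alpha$ times. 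So everything reduces to a good upper bound on these joint cumulants.

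The first and central step is to prove a bound of the form $\big|\frac{1}{v(G)}\kappa(v(G)X_{\alpha_1},\dots,v(G)X_{\alpha_r})\big| \le v(G) \cdot (CD)^{r} (r-1)!$ — or more precisely something that, after dividing by $v(G)$ and multiplying by the combinatorial factor, yields geometric decay with ratio controlled by $4eD$. The key structural fact is that $v(G)X_\alpha$ is a \emph{sum of local contributions}: $v(G)X(G,C)_i = \sum_{v\in V(G)} \mathbf{1}[C(v)=i]$ and $v(G)X(G,C)_{i,j} = \sum_{(uv)\in E(G)} (\text{indicator that }\{C(u),C(v)\}=\{i,j\})$, where the vertices are colored independently and uniformly. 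Thus each $v(G)X_\alpha$ is a sum over a ground set (vertices or edges) of bounded random variables each depending on the color of at most two vertices. For such sums of "local" variables on a bounded-degree dependency graph, joint cumulants vanish unless the union of the supports is connected in the dependency graph; this is the standard cumulant–cluster-expansion phenomenon. Counting connected subsets: the number of ways to pick $r$ edges/vertices of $G$ whose union is connected and contains a fixed vertex is at most something like $v(G)$ times $(eD)^{r}$ or so (a standard bound on the number of connected subgraphs of size $r$ through a given vertex in a graph of max degree $D$), and each term in the cumulant expansion \eqref{cumulexpr} is bounded by $(r-1)!$ times a product of expectations of products of indicators, each of which is at most $1$ in absolute value, with the number of partitions absorbed into the $(r-1)!\cdot r^{r}$-type factors. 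Combining, $\big|\frac{1}{v(G)}\kappa(\dots)\big| \le (r-1)! (c_1 D)^r$ for an absolute constant $c_1$.

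The second step is bookkeeping: sum the absolute values of all Taylor monomials of total degree $r$. There are at most $\binom{N+r-1}{r} \le N^r$ monomials of degree $r$ (or one uses multinomial coefficients directly), each with coefficient bounded by $\frac{1}{r!}\cdot r! \cdot (r-1)!(c_1D)^r / (\text{the multinomial normalization})$ — the $r!$ from the multinomial coefficient cancels the $1/r!$, leaving $(r-1)!(c_1 D)^r$ per monomial in the worst case, times $\|\lambda\|_\infty^r$. Here I would be slightly careful: because the coordinates $X_i$ (vertex proportions) contribute only a factor $\le 1$ each while the $X_{ij}$ contribute $\le D$, and because the connectivity constraint already limits how many distinct coordinates can appear, the crude bound $N^r$ is wasteful and the honest count of connected configurations gives the clean threshold $(4eD)^{-1}$: the $e$ comes from the $\sum r^r/r! $-type estimate and the $4D$ from connected-subgraph counting with the two-vertex locality. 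So the tail $\sum_{r \ge R} (\text{sum of degree-}r\text{ terms})$ is bounded by a geometric series in $4eD\|\lambda\|_\infty$, which converges — uniformly in $G$, since every bound produced depended only on $D$ — precisely when $\|\lambda\|_\infty < (4eD)^{-1}$, and converges uniformly on $\|\lambda\|_\infty \le c$ for $c < (4eD)^{-1}$. Absolute uniform convergence of the Taylor series on a neighborhood of $0$ gives analyticity of $f_{G,k}$ there.

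\textbf{Main obstacle.} The hard part is the cumulant bound in the first step: one must (a) correctly set up the locality/dependency-graph structure so that joint cumulants of the local pieces vanish off connected configurations — this uses multilinearity of $\kappa$ to expand $\kappa(v(G)X_{\alpha_1},\dots)$ into a sum of joint cumulants of individual vertex/edge indicators, then the vanishing-on-disconnected-supports lemma — and (b) carry the constants through the connected-subgraph count and the partition sum \eqref{cumulexpr} carefully enough to land at $4eD$ rather than some worse constant. Everything else (the combinatorial cancellation of $r!$ against the multinomial coefficient, the geometric series, and deducing analyticity and uniformity) is routine.
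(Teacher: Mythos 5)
Your core step is the same mechanism the paper uses: each coordinate of $v(G)X(G,k)$ is a sum of local vertex/edge indicator contributions whose dependency graph has degree $O(D)$, joint cumulants vanish on disconnected configurations, and connected-cluster/tree counting gives cumulant bounds of order $(r-1)!(cD)^r$ — this is exactly the content of the result of \cite{FMN} that the paper invokes (Theorem \ref{cumulbound}, resting on the bound $|\kappa|\le A^r2^{r-1}\tree(H)$). The genuine gap is in the step you call routine bookkeeping. If you bound each mixed partial derivative of order $r$ separately and then sum absolute values over all monomials of degree $r$, you pay a factor of order $N^r$ with $N=k+k^2$, and the resulting radius of convergence is of order $1/(cDk^2)$: enough for a $k$-dependent neighborhood (statement (3) of the theorems) but not for the $k$-independent radius $(4eD)^{-1}$ asserted in the Proposition and needed for statement (2). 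You acknowledge the crude count is wasteful and appeal to an ``honest count of connected configurations,'' but connectivity constrains only the choice of the $r$ vertices/edges of $G$, not the color indices attached to the $r$ slots: for a fixed connected tuple of edges there remain $k^{\Theta(r)}$ color choices with nonvanishing cumulants, and the per-term bound (with $\|Y_{i,j,e}\|_r\le 1$, or even $\approx k^{-2/r}$) does not decay fast enough in $k$ to absorb them. Some summation over colors \emph{inside} the cumulant, before taking absolute values, is genuinely required, and your sketch does not supply it.

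That missing ingredient is precisely what the paper's proof provides by a device absent from your plan: it restricts to a line through the origin, $g(z)=f_{G,k}(z\lambda_0)$ with $\|\lambda_0\|_\infty=1$, so that each vertex or edge of $G$ contributes a single random variable $Y_\alpha$ — its total contribution to $\langle\lambda_0,v(G)X\rangle$, already summed over all colors — bounded by a constant, and then applies Theorem \ref{cumulbound} to $Y=\sum_\alpha Y_\alpha$ with dependency degree $\Delta=2D$. This yields $|g^{(l)}(0)|\le 2^{l-1}l^{l-2}(v(G)+|E(G)|)(2D+1)^{l-1}$, hence a directional radius $1/(4eD)$ uniform over all graphs of maximum degree $D$ and over all directions, which gives the claimed uniform convergence on $\|\lambda\|_\infty\le c$ for $c<(4eD)^{-1}$. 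To repair your argument, either adopt this directional reduction or otherwise bundle the color indices per edge before estimating cumulants; as written, the multivariate summation is the step that fails to deliver the stated domain.
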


In order to prove this proposition, we first need to state an auxiliary result. Specifically, we will use Theorem $9.3$ in \cite{FMN}, which states the following: 
\begin{theorem} \label{cumulbound} Let $\{Y_\alpha \}_{\alpha \in W}$ be a family of random variables with finite moments, and assume there is a graph $L$ on $W$ with maximum degree $\Delta$ that has the following property: If $W_1$ and $W_2$ are disjoint subsets of $W$ with no edges between them, then $\{Y_\alpha \}_{\alpha \in W_1}$ and $\{Y_\alpha \}_{\alpha \in W_2}$ are independent. Assume that each $\|Y_\alpha\|_r \le A$, where $\|X\|_r=(\mathbb{E}|X|^r)^{1/r}$. Let $Y=\sum_\alpha Y_\alpha$. Then we have the following bound on the cumulant: \[|\kappa_r(Y)| \le 2^{r-1}r^{r-2}|W|(\Delta+1)^{r-1}A^r.\]

\end{theorem}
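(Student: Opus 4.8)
The plan is to reduce the whole statement to the one structural fact that makes dependency graphs useful for cumulants: a joint cumulant vanishes as soon as its arguments split into two mutually independent groups. First I would use the multilinearity of the joint cumulant noted before \eqref{cumulexpr} to expand
\[
\kappa_r(Y)=\kappa\Big(\sum_\alpha Y_\alpha,\dots,\sum_\alpha Y_\alpha\Big)=\sum_{(\alpha_1,\dots,\alpha_r)\in W^r}\kappa(Y_{\alpha_1},\dots,Y_{\alpha_r}),
\]
so that bounding $\kappa_r(Y)$ becomes bounding a sum of joint cumulants of the summands over all ordered $r$-tuples of indices.

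The conceptual core is a vanishing lemma: $\kappa(Y_{\alpha_1},\dots,Y_{\alpha_r})=0$ unless the support $\{\alpha_1,\dots,\alpha_r\}$ induces a connected subgraph of $L$. To prove it, I would suppose the support splits as $T_1\sqcup T_2$ with no edges of $L$ between $T_1$ and $T_2$; by the dependency-graph hypothesis $\{Y_\alpha\}_{\alpha\in T_1}$ and $\{Y_\alpha\}_{\alpha\in T_2}$ are independent, so the arguments of the cumulant partition into two nonempty independent groups. Then $\log\mathbb{E}\exp(\cdot)$ splits as a sum of a function of the $T_1$-variables and a function of the $T_2$-variables, and the mixed partial derivative defining the joint cumulant (which differentiates once in a variable from each group) vanishes. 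Hence only tuples with $L$-connected support survive.

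It then remains to (i) bound each surviving cumulant and (ii) count the surviving tuples. For (i) I would combine \eqref{cumulexpr} with the generalized Hölder inequality: for any block $B$, $|\mathbb{E}(\prod_{i\in B}Y_{\alpha_i})|\le\prod_{i\in B}\|Y_{\alpha_i}\|_{|B|}\le A^{|B|}$, using monotonicity of $L^p$-norms and $|B|\le r$, so each product over blocks is at most $A^r$. For (ii) I would enumerate connected tuples by a spanning-tree argument: the graph on positions $[r]$ joining $i\sim j$ when $\alpha_i=\alpha_j$ or $\{\alpha_i,\alpha_j\}\in E(L)$ is connected, hence contains a spanning tree, and choosing a labelled tree on $[r]$ (at most $r^{r-2}$ by Cayley), a root value ($|W|$ choices), and for each of the $r-1$ tree edges a value equal to or adjacent to the parent ($\le\Delta+1$ choices) reconstructs every connected tuple, giving at most $|W|\,r^{r-2}(\Delta+1)^{r-1}$ of them.

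The main obstacle is that these two bounds must not be multiplied naively: the worst-case single-cumulant factor $\sum_{\pi}(|\pi|-1)!$ coming from \eqref{cumulexpr} already exceeds $2^{r-1}$ (for $r=3$ it equals $6>4$), so its product with the tuple count overshoots the target $2^{r-1}r^{r-2}|W|(\Delta+1)^{r-1}A^r$. The delicate step is therefore to run the moment-cumulant expansion and the tree enumeration on the same index structure, so that the partition bookkeeping and the sign factors collapse into the single exponential factor $2^{r-1}$ while the tree count supplies $r^{r-2}$; carrying out this combined count, with $A^r$ from Hölder and $|W|(\Delta+1)^{r-1}$ from the rooted-tree reconstruction, yields exactly the claimed bound.
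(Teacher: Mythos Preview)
Your overall strategy matches the paper's outline: expand $\kappa_r(Y)$ by multilinearity into a sum over tuples $(\alpha_1,\dots,\alpha_r)$, discard those whose dependency graph $H$ on $[r]$ is disconnected, and control the rest via spanning trees together with the degree bound on $L$. The paper likewise arrives at the count $|W|(\Delta+1)^{r-1}$ per labelled tree on $[r]$ and the factor $r^{r-2}$ from Cayley.

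Where you leave a gap is precisely the point you yourself flag as ``delicate.'' You correctly observe that bounding each surviving cumulant crudely by $A^r\sum_\pi(|\pi|-1)!$ and then multiplying by the number of connected tuples overshoots, and you assert that running the partition expansion and the tree enumeration together collapses the partition factor to $2^{r-1}$ --- but you do not say how. The paper resolves this with an explicit intermediate lemma: for the dependency graph $H$ on $[r]$ one has
\[
|\kappa(Y_{\alpha_1},\dots,Y_{\alpha_r})|\;\le\;A^r\,2^{r-1}\,\tree(H),
\]
where $\tree(H)$ is the number of spanning trees of $H$. Summing this over all tuples is then a sum of $\tree(H)$, which one evaluates by interchanging the order of summation: for each of the $r^{r-2}$ labelled trees $T$ on $[r]$, the number of tuples with $T\subseteq H$ is at most $|W|(\Delta+1)^{r-1}$ by your rooted reconstruction. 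So the missing ingredient in your argument is this per-cumulant spanning-tree bound; it is exactly what converts the ordered-Bell-type factor you were worried about into $2^{r-1}\tree(H)$ and makes the double count over trees and over tuples line up to give the stated inequality.
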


The main tool in their proof is the following lemma, implicit in their proof:
\begin{lemma}
Given $r$ random variables $Y_{\alpha_1},Y_{\alpha_2},...,Y_{\alpha_r}$, if we let $H$ be a graph on $[r]$ with the same property as above, then 
\[ | \kappa(Y_{\alpha_1},Y_{\alpha_2},...,Y_{\alpha_r})| \le A^r 2^{r-1} \tree(H),\]
where $\tree(H)$ is the number of spanning trees of $H$. 
\end{lemma}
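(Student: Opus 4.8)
The plan is to start from the moment--cumulant expansion \eqref{cumulexpr},
\[
\kappa(Y_{\alpha_1},\dots,Y_{\alpha_r})=\sum_{\pi}(-1)^{|\pi|-1}(|\pi|-1)!\prod_{B\in\pi}\mathbb{E}\Big(\prod_{i\in B}Y_{\alpha_i}\Big),
\]
the sum over partitions $\pi$ of $[r]$, and to use the independence hypothesis to collapse most of the terms. If a block $B$ does not induce a connected subgraph of $H$, its $H[B]$-components are pairwise non-adjacent in $H$, so the hypothesis makes the corresponding families independent and $\mathbb{E}(\prod_{i\in B}Y_{\alpha_i})$ factors over those components. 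Consequently the product $\prod_{B\in\pi}\mathbb{E}(\cdots)$ depends only on the partition $\tau(\pi)$ obtained from $\pi$ by splitting every block into its $H$-connected components, and we may regroup the sum as $\sum_{\tau}a_\tau\prod_{C\in\tau}\mathbb{E}(\prod_{i\in C}Y_{\alpha_i})$, where $\tau$ ranges over partitions of $[r]$ into $H$-connected blocks and $a_\tau=\sum_{\pi:\,\tau(\pi)=\tau}(-1)^{|\pi|-1}(|\pi|-1)!$.

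The combinatorial heart is to control $a_\tau$. The partitions $\pi$ with $\tau(\pi)=\tau$ are exactly those obtained by merging blocks of $\tau$ along independent sets of the quotient graph $H/\tau$ (the graph on the blocks of $\tau$, with an edge between two blocks whenever an $H$-edge joins them); thus they correspond bijectively to partitions $\rho$ of $V(H/\tau)$ into independent sets, with $|\pi|=|\rho|$. Writing the chromatic polynomial as $P(G,x)=\sum_{\rho}(x)_{|\rho|}$ with $(x)_j=x(x-1)\cdots(x-j+1)$ and noting that the coefficient of $x^1$ in $(x)_j$ is $(-1)^{j-1}(j-1)!$, we get $a_\tau=[x^1]P(H/\tau,x)$. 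In particular $a_\tau=0$ whenever $H/\tau$ is disconnected (then $x^2\mid P(H/\tau,x)$), so only $\tau$ with connected quotient survive; and a deletion--contraction induction, matching $|[x^1]P(G,x)|=|[x^1]P(G\setminus e,x)|+|[x^1]P(G/e,x)|$ for a non-bridge $e$ (using the sign alternation of chromatic coefficients, with the contraction simplified) against $\tree(G)=\tree(G\setminus e)+\tree(G/e)$ for the multigraph contraction, yields $|a_\tau|\le\tree(H/\tau)$.

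To finish, generalized H\"older gives $|\mathbb{E}(\prod_{i\in C}Y_{\alpha_i})|\le\prod_{i\in C}\|Y_{\alpha_i}\|_{|C|}\le A^{|C|}$ since $|C|\le r$, hence $\prod_{C\in\tau}|\mathbb{E}(\cdots)|\le A^r$ and so $|\kappa(Y_{\alpha_1},\dots,Y_{\alpha_r})|\le A^r\sum_{\tau}\tree(H/\tau)$, the sum over partitions of $[r]$ into $H$-connected blocks. It remains to prove $\sum_{\tau}\tree(H/\tau)\le 2^{r-1}\tree(H)$. Here I would count pairs $(\tau,F)$ where $F$ is a spanning tree of $H/\tau$, viewed concretely as a set of $|\tau|-1$ cross-block $H$-edges forming a tree on the blocks, and inject them into pairs $(T,S)$ with $T$ a spanning tree of $H$ and $S\subseteq E(T)$: extend $F$ to a spanning tree $T$ of $H$ by adjoining, inside each block $C$, a canonical spanning tree of the connected graph $H[C]$ (the greedy one for a fixed edge ordering), and set $S=F$. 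Since the blocks of $\tau$ are recovered as the connected components of $T\setminus F$, this map is injective, and the number of targets is $\sum_{T}2^{|E(T)|}=2^{r-1}\tree(H)$ (the case of disconnected $H$ being trivial, as the cumulant then vanishes). I expect this last counting step, and to a lesser extent the chromatic-polynomial bookkeeping for $a_\tau$, to be the parts demanding care; the rest is the standard moment--cumulant manipulation together with H\"older's inequality.
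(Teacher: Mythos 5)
The paper does not actually prove this lemma: it quotes it as implicit in \cite{FMN} and only sketches how it is combined with a tree count to deduce Theorem \ref{cumulbound}. Your argument therefore has to stand on its own, and as far as I can check it does. The moment--cumulant expansion together with the factorization of $\mathbb{E}(\prod_{i\in B}Y_{\alpha_i})$ over the $H$-components of $B$ (applied iteratively, component by component, which the stated independence hypothesis permits) correctly collapses the sum to partitions $\tau$ into $H$-connected blocks; the identification of the collapsed coefficient $a_\tau$ with the linear coefficient of the chromatic polynomial of the quotient $H/\tau$ is right, and it automatically annihilates the $\tau$ with disconnected quotient. The bound $|[x^1]P(G,x)|\le \tree(G)$ via deletion--contraction is correct (it is the Tutte-polynomial inequality $T_G(1,0)\le T_G(1,1)$), and the simple-versus-multigraph quotient issue is handled in the right direction, since passing to the multigraph quotient can only increase $\tree$. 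Generalized H\"older with all exponents equal to $|C|\le r$ gives the $A^r$ factor, and the injection $(\tau,F)\mapsto (T,F)$, with $\tau$ recovered as the vertex classes of the components of $T\setminus F$, cleanly yields $\sum_\tau \tree(H/\tau)\le 2^{r-1}\tree(H)$; the edge count $|\tau|-1+\sum_C(|C|-1)=r-1$ confirms that $T$ is indeed a spanning tree. This is the same general strategy as in \cite{FMN} (reduce to connected structures, then count trees), but your packaging of the cancellation step through the chromatic polynomial of $H/\tau$ is a self-contained and rather clean substitute for the argument the paper leaves entirely to the reference.
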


A rough outline of the proof of Theorem \ref{cumulbound} in \cite{FMN} is as follows. The joint cumulant is a multilinear function, so it is just the sum of $\kappa(Y_{\alpha_1},Y_{\alpha_2},...,Y_{\alpha_r})$ where each $\alpha_i \in W$. Since we can think of $H$ as always having $[r]$ as its vertex set, we can take each possible tree on $[r]$ (we have $r^{r-2}$), and using the bound $\Delta$ on the maximum degree of $L$, we can bound the number of times each possible tree will be a subtree of $H$.

\begin{proof}[Proof of Proposition \ref{convdom}]
Clearly if we fix $G$, $f_{G,k}$ is analytic on some domain around zero. To analyze the domain, we analyze it in a fixed direction. Take any $\lambda_0 \in \mathbb{R}^{k+k^2}$ such that $\|\lambda_0 \|_\infty=1$, and consider the function $g:\mathbb{R} \rightarrow \mathbb{R}, g(z)=f_{G,k}(z \lambda_0)$. Let $Y=\langle \lambda_0,v(G) X \rangle$. We can write $Y=\sum_\alpha Y_\alpha$, where $\alpha$ is either a vertex or an edge of $G$, and gives the contribution of that vertex or edge to $\langle \lambda_0,v(G)X \rangle$. We define the graph $L$ with vertex set $W=V(G) \cup E(G)$, and we connect each $v \in V(G)$ with its incident edges, and each $e \in E(G)$ with its two incident vertices and its incident edges. The maximum degree of $L$ will be at most $\Delta=2D$. We also have that for any $l \ge 1$, $\| Y_\alpha \|_l \le \| Y_\alpha \|_\infty \le 1$. Using Theorem \ref{cumulbound}, 
\[|g^{(l)}(0)| = |\kappa_r(Y_0)| \le 2^{l-1}l^{l-2}(v(G)+|E(G)|)|(2D+1)^{l-1}.\]
Since $|E(G)| \le v(G)D/2$, \[\limsup_{l \rightarrow \infty}\Bigg( \frac{|g^{(l)}(0)|}{l!}\Bigg)^{1/l} \le \limsup_{l \rightarrow \infty} \Bigg(\frac{2^{l-2}l^{l-2}v(G)(D+2)(2D+1)^{l-1}}{l!}\Bigg)^{1/l}=4eD ,\] so the Taylor series converges for $z \in \mathbb{R}$, $|z| < 1/(4eD)$. Because of the uniform bound on the partial derivatives, the convergence is also uniform across all graphs (of bounded degree $D$) for $|z| \le c$.

\end{proof}

\section{Left convergence and the partial derivatives}

In this section, we will prove the equivalence of left convergence with the convergence of the partial derivatives of $f_{G,k}$ at the origin. We will use the following notion, closely related to $\inj$. Let $F$ be a multigraph with $l$ labeled edges $e_1,e_2,...,e_l$ (for some positive integer $l$), and no isolated vertices. For a graph $G$, we define $i(F,G)$ as the number of ways of choosing a sequence of $l$ edges of $G$ (that is, a multiset of $l$ edges of $G$ labeled from $1$ to $l$), such that the mulitgraph induced by these $l$ edges is isomorphic to $F$. Let $\mathscr{F}^*_l$ be the collection of multigraphs with $l$ edges labeled from $1$ to $l$, but vertices unlabeled, with no isolated vertices. Let $\mathscr{F}_l$ be the subset of $\mathscr{F}^*$ consisting of just the connected multigraphs. 

We observe that left convergence is equivalent to saying that the limit
\[\lim_{n \rightarrow \infty} \frac{i(F,G_n)}{v(G_n)}\]
exists for any $F \in \mathscr{F}_l$ for all $l>0$. Indeed, for any $F \in \mathscr{F}_l$, $i(F,G_n)=i(F',G_n)$, where $F'$ is obtained from $F$ by removing parallel edges. Then $i(F',G)=\inj(F',G)$, unless $F'$ is a single edge, in which case $i(F',G)=\inj(F',G)/2$. Thus, convergence of $i(F,G_n)/v(G_n)$ for all $F \in \mathscr{F}_l$ is equivalent to convergence of $\inj(F,G_n)/v(G_n)$ for all connected graphs $F$ with at most $l$ (unlabeled) edges (note $\inj(F,G)=0$ for $F$ with parallel edges).

\begin{proposition} \label{indep}
Suppose we have a graph $G$ and a positive integer $k$. Consider the function $f_{G,k}:\mathbb{R}^{k+k^2} \rightarrow \mathbb{R}$ defined in \ref{fdefi}.
For any nonnegative integer $l$, the values of the $l$-th partial derivatives at the origin are linear combinations of the values of $i(F,G)/v(G)$ over various choices of $F \in \cup_{l' \le l}\mathscr{F}_{l'}$ (recall that in the definition of $f_{G,k}$ we also divided by $v(G)$), and the coefficients do not depend on $G$. Moreover, if $k \ge 2l$, then this is a bijection, that is, if two graphs $G_1$ and $G_2$ have different values of $i(F,G_1)/v(G_1)$ and $i(F,G_2)/v(G_2)$ for some connected graph $F$ on at most $l$ edges, then their $l$-th partial derivatives will be different.

\end{proposition}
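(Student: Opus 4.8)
The plan is to identify the partial derivatives of $f_{G,k}$ at the origin with normalized joint cumulants of the ``counting'' variables $v(G)X_a$, and then to exploit the locality of these variables together with the vanishing of joint cumulants on independent families.

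First I would note that by Definition \ref{fdefi}, $f_{G,k}$ is $\tfrac1{v(G)}$ times the cumulant generating function of the vector $v(G)X(G,k)$, so for any indices $a_1,\dots,a_l$ (each a colour $p\in[k]$ or an ordered pair $pq$) one has $\partial_{a_1}\cdots\partial_{a_l}f_{G,k}(0)=\tfrac1{v(G)}\,\kappa(v(G)X_{a_1},\dots,v(G)X_{a_l})$. Each counting variable is a sum of local indicators — $v(G)X_p=\sum_{u\in V(G)}\mathbf 1[c(u)=p]$ and $v(G)X_{pq}=\sum_{e=uv\in E(G)}\mathbf 1[e\text{ realises }pq]$ — so by multilinearity the derivative becomes a sum over tuples $(\beta_1,\dots,\beta_l)$, $\beta_t$ a vertex or edge of $G$ compatible with $a_t$, of the joint cumulant of the corresponding indicators. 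Since the colouring is i.i.d.\ uniform, two such indicators are independent exactly when the vertex sets they read are disjoint, so by the standard vanishing property the term for $(\beta_1,\dots,\beta_l)$ is $0$ unless the configuration spanned by $\beta_1,\dots,\beta_l$ is connected; after discarding vertex-atoms coinciding with an endpoint of some edge-atom, a surviving configuration is a copy in $G$ of some $F\in\mathscr{F}_{l'}$ with $l'\le l$ (the purely-vertex case contributing only a constant). The joint cumulant depends only on the isomorphism type of this configuration and the colour-indices attached to it — a constant independent of $G$ — while the number of tuples spanning a copy of a given type equals, up to a universal factor, $i(F,G)$. Summing and dividing by $v(G)$ exhibits $\partial_{a_1}\cdots\partial_{a_l}f_{G,k}(0)$ as a $G$-independent linear combination of the quantities $i(F,G)/v(G)$ over $F\in\bigcup_{l'\le l}\mathscr{F}_{l'}$ (plus a constant, harmless for Theorem \ref{main2}), which is the first assertion.

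For the second assertion I would induct on the number $m$ of edges, recovering all $i(F,G)/v(G)$ for connected simple $F$ with exactly $m$ edges from the partials of order $m$, assuming all such quantities with fewer edges already determined. A connected simple $F_0$ with $m\le l$ edges has $|V(F_0)|\le m+1\le 2l$, so for $k\ge 2l$ there is an injective colouring $\phi\colon V(F_0)\hookrightarrow[k]$; take as detector the order-$m$ derivative $\partial_{\mathbf a}f_{G,k}(0)$ with $\mathbf a$ listing the $m$ distinct edge-pairs of $F_0$ under $\phi$. In the expansion above, configurations using $m$ distinct $G$-edges produce the terms $i(F',G)/v(G)$ with $F'$ connected simple on exactly $m$ edges, while those with a repeated edge produce quantities with fewer edges, hence known; so it suffices that the matrix $[\,c(F_0,F')\,]$ of these coefficients, indexed by connected simple graphs $F_0,F'$ on $m$ edges, be invertible. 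The key estimate is that $c(F_0,F')=\Theta(k^{-d(F_0,F')})$ with nonzero leading coefficient, where $d(F_0,F_0)=|V(F_0)|$ while for $F'\not\cong F_0$ one has $d(F_0,F')>|V(F_0)|$: either $F'$ admits a placement of $F_0$'s $m$ pairs realising all of them — an edge-bijective, necessarily non-injective, map $V(F')\to V(F_0)$, forcing $|V(F')|>|V(F_0)|$ — or else the single-block term of \eqref{cumulexpr} vanishes and only strictly finer, hence strictly lower-order, partition terms survive. Thus, rescaling row $F_0$ by $k^{|V(F_0)|}$ makes the matrix converge, as $k\to\infty$, to a diagonal matrix with nonzero diagonal; it is therefore invertible for all large $k$, and the induction recovers every $i(F_0,G)/v(G)$.

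The main obstacle is exactly this invertibility — in particular, making the threshold $k\ge 2l$ (rather than merely ``$k$ large in terms of $l$'') explicit. The leading-order-in-$k$ argument only shows the relevant determinant is a nonzero rational function of $k$, so pinning down that it does not vanish at $k=2l$ requires controlling the subleading partition terms quantitatively, namely showing $\sum_{B\in\pi}|V(B)|>|V(F_0)|$ for every proper partition $\pi$ that can arise from an $F'\not\cong F_0$, using connectivity of $F'$ to bound the overlaps among the vertex sets $V(B)$. The remaining ingredients — multilinearity of $\kappa$, vanishing on independent families, and matching the coefficient counts with the $i(F,\cdot)$'s — are routine once this book-keeping is in place.
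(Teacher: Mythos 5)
Your first half is essentially the paper's own argument: identify the order-$l$ partials at the origin with $\tfrac{1}{v(G)}\kappa(v(G)X_{a_1},\dots,v(G)X_{a_l})$, expand by multilinearity into joint cumulants of local indicators, discard disconnected configurations by the vanishing of cumulants on independent families, and group the surviving terms by isomorphism type to get a $G$-independent linear combination of the $i(F,G)/v(G)$. The genuine gap is in the second half. Your invertibility argument is asymptotic in $k$: rescaling by $k^{|V(F_0)|}$ and letting $k\to\infty$ shows the coefficient matrix is invertible for all sufficiently large $k$, and, as you yourself concede, this only exhibits the relevant determinant as a nonzero rational function of $k$; it does not rule out vanishing at the specific threshold $k\ge 2l$ asserted in the proposition. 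Your proposed repair (showing $\sum_{B\in\pi}|V(F'_B)|>|V(F_0)|$ for proper partitions $\pi$) again only controls the order in $k$ of the subleading terms, so it still delivers ``$k$ large in terms of $l$'', not ``$k\ge 2l$''. (That weaker statement would in fact suffice for Theorem \ref{main2}, since Corollary \ref{leftpartequiv} quantifies over all $k$, but it does not prove the proposition as stated.) A minor additional slip: the connectivity needed in that bookkeeping is that of the colour pattern $F_0$ playing the role of $J$, not of $F'$.

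The paper's proof avoids asymptotics and shows where $k\ge 2l$ really enters. With $\kappa(F,J)=\sum_\pi(|\pi|-1)!(-1)^{|\pi|-1}x(F_\pi,J)$, it factors the coefficient matrix as $K=EP$, where $E=(x(F,J))$ is indexed by $\mathscr{F}^*_l\times\mathscr{F}^*_l$ and $P$ records the partition coefficients. Ordering $\mathscr{F}^*_l$ by the number of non-isolated vertices makes $E$ triangular with nonzero diagonal, hence invertible exactly (no limit in $k$), and $P$ restricted to the rows indexed by connected graphs is the identity, so $K$ has independent columns. The hypothesis $k\ge 2l$ is used only so that every $J\in\mathscr{F}^*_l$ (at most $2l$ non-isolated vertices) is realizable as a pattern of colour pairs in $[k]$, making every $\kappa_G(J)$ an actual order-$l$ partial derivative and the diagonal entries $x(F,F)$ strictly positive; no determinant estimate at a specific finite $k$ is ever needed. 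If you wish to keep your induction on the number of edges, you would need to replace the $k\to\infty$ diagonal-dominance step with an exact triangularity argument of this kind.
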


%\begin{remark}
%The above is true if there are no multiple edges in $G$. If we allow multiple edges, a slight modification of the proposition will be true: it is still true that the $l$-th partial derivates depend only on $\inj(F,G)/v(G)$ for $F$'s with at most $l$ edges, however, we only obtain a bijection between \emph{all} $l'$-th partial derivates with $l' \le l$ and all values $\inj(F,G)/v(G)$ with $|E(F)| \le l$.
%\end{remark}

\begin{proof}[Proof of Proposition \ref{indep}]
For ease of presentation of the proof, we will assume that each variable in the partial derivative corresponds to an edge, it will be clear that the proof also works if some of the variables correspond to vertices. In this case, we will only need $F \in \mathscr{F}_l$. We know that the partial derivative according to the variables $\lambda_{i_1,j_1},\lambda_{i_2,j_2},..., \lambda_{i_l,j_l}$ is equal to $\frac{1}{v(G)}\kappa(v(G)X_{i_1,j_1},v(G)X_{i_2,j_2},...,v(G)X_{i_l,j_l})$. Suppose the pairs $(i_1,j_1),(i_2,j_2),...,(i_l,j_l) \in [k]^2$, form the (multi)graph $\widetilde{J}$. We think of $\widetilde{J}$ as a multigraph on $[k]$ with $l$ edges whose edges are labeled $1$ to $l$. Define $\kappa_G(\widetilde{J})$ as the joint cumulant $\kappa(v(G)X_{i_1,j_1},v(G)X_{i_2,j_2},...,v(G)X_{i_l,j_l})$ . Note that if we permute the vertices of $\widetilde{J}$, $\kappa_G(\widetilde{J})$ does not change (this is the same as permuting the colors).

%Let $\mathscr{F}^*_l$ be the collection of multigraphs with $l$ edges labeled from $1$ to $l$, but vertices unlabeled, with no isolated vertices. 
Each $\widetilde{J}$ has a representative $J$ in $\mathscr{F}_l^*$ (after removing isolated vertices and unlabeling the remaining vertices), and because permuting the vertices does not change the expression, $\kappa_G(\widetilde{J})$ depends only on the representative $J$ (and on $k$, which is fixed), so with a slight abuse of notation we will write $\kappa_G(J)$ for $J \in \mathscr{F}^*_l$. If $k \ge 2l$, then each graph in $\mathscr{F}^*_l$ will have a corresponding graph on the vertex set $[k]$ (after adding isolated vertices).

We have 
\[\kappa(v(G)X_{i_1,j_1},v(G)X_{i_2,j_2},...,v(G)X_{i_l,j_l})=\sum_{e_1,e_2,...,e_l \in E(G)}\kappa(Y_{i_1,j_1,e_1},Y_{i_2,j_2,e_2},...,Y_{i_l,j_l,e_l}).\]

Here $Y_{i_p,j_p,e_p}$ is the indicator random variable of whether the two endpoints of $e_p$ are colored with colors $i_p$ and $j_p$. Let us look at a term $\kappa(Y_{i_1,j_1,e_1},Y_{i_2,j_2,e_2},...,Y_{i_l,j_l,e_l})$. For simplicity of notation, let $Z_p=Y_{i_p,j_p,e_p}$. It is not difficult to see from the definition of the joint cumulant that if $Z_1,Z_2,...,Z_l$ can be partitioned into two groups that are independent of each other, then $\kappa(Z_1,Z_2,...,Z_l)$ is zero. This happens if $e_1,e_2,...,e_l$ do not form a connected subgraph. Otherwise, we can write \eqref{cumulexpr}
\[\kappa(Z_1,Z_2,...,Z_l)= \sum_\pi (|\pi|-1)!(-1)^{|\pi|-1}\prod_{B \in \pi}\mathbb{E}(\prod_{p \in B}Z_p),\]
 
%Clearly, given $e_1,e_2,...,e_l$ that do form a connected subgraph, each term $\prod_{B \in \pi}\mathbb{E}(\prod_{j \in B}Z_j)$ depends only on the type of subgraph they form, and so the cumulant $\kappa(Z_1,Z_2,...,Z_l)$.

%First, consider the trivial partition. Then we just have the term $\mathbb{E}(\prod_p Z_p)$. The value of this term depends only on the subgraph formed by the edges $e_1,e_2, ..., e_l$. 

We now introduce additional notation to facilitate the analysis of this expression. Let $E$ be any multigraph with $l$ labeled edges $f_1,f_2,...,f_l$. Denote $\mathbb{E}(\prod_p Y_{i_p,j_p,f_p} )$ as $x(E,J)$. Note that if $E$ is disconnected, the disjoint union of $E_1$ and $E_2$, then $x(E,J)=x(E_1,J)x(E_2,J)$. We can define this value for any $J$ that comes from a graph on the vertex set $[k]$. Now, given a partition $\pi$ of $[l]$, we define $E_\pi$ as follows. For each $B \in \pi$, take the subgraph of $E$ formed by the edges with labels in $B$, and the vertices that are endpoints of at least one of these edges. Then take $E_\pi$ as the disjoint union of these graphs, where each edge keeps its original label from $E$. 

Let $F$ denote the subgraph formed by the edges $e_1,e_2,...,e_l$ (with edges labeled). Using the above notation, we have that \[\kappa(Z_1,Z_2,...,Z_l)=\sum_\pi(|\pi|-1)!(-1)^{|\pi|-1}x(F_\pi,J).\]
We can see that the previous expression depends only on $F$, so define for any graph $F \in \mathscr{F}_l^*$
\[\kappa(F,J):=\sum_\pi(|\pi|-1)!(-1)^{|\pi|-1}x(F_\pi,J).\]

%Let $\mathscr{F}_m$ be the collection of connected multigraphs with $m$ edges. For such a graph $F$, let $i(F,G)$ be the number of ways of choosing a multi-set of $l$ edges of $G$ such that the multigraph induced by these $m$ edges is isomorphic to $F$. Let $L(F)$ be the collection of graphs on $m$ edges with edges labeled from $1$ to $m$ such that the underlying non-labeled graph is isomorphic to $F$, and with a slight abuse of notation, $F$ will also refer to any chosen representative for each $F$ out of $L(F)$. Then we have
Then (recalling that $J$ is the graph formed by the pairs $(i_p,j_p)$) we have
%\begin{align*}
\[\kappa(v(G)X_{i_1,j_1},...,v(G)X_{i_l,j_l})= \sum_{F \in \mathscr{F}_l} i(F,G) \sum_{\pi } (|\pi|-1)!(-1)^{|\pi|-1} x(F_\pi,J)=\sum_{F \in \mathscr{F}_l}i(F,G)\kappa(F,J).\]
%\sum_{F \in \mathscr{F}_l}i(F,G)\sum_{F' \in L(F)}\kappa(F',J)=\\ \sum_{F \in \mathscr{F}_l} %i(F,G)\sum_{F' \in L(F)} \sum_{\pi } (|\pi|-1)!(-1)^{|\pi|-1} x(F'_\pi,J)=\sum_{F \in \mathscr{F}_l} %i(F,G) \sum_{\pi} (|\pi|-1)!(-1)^{|\pi|-1}x(F_\pi,J)=\sum_{F \in \mathscr{F}_l} i(F,G) c(F,J).
%\end{align*}
This proves the first part of the Proposition.

Let us prove the second part. Since $k \ge 2l$, each graph $J \in \mathscr{F}^*_l$ has a corresponding graph on $[k]$, so we just need to show that if we have two graphs $G$ and $G'$ such that $i(F,G)\ne i(F,G')$ for some $F \in \mathscr{F}_l$, then $\kappa_G(J) \ne \kappa_{G'}(J)$ for some $J \in \mathscr{F}^*_l$. Let $K$ be the matrix with columns indexed by $F \in \mathscr{F}_l$, rows indexed by $J \in \mathscr{F}_l^*$, and entries $\kappa(F,J)$. Let $E$ be the matrix with rows and columns indexed by $\mathscr{F}_l^*$, and the entry in column $F$ and row $J$ is $x(F,J)$. Let $P$ be the matrix with columns indexed by $\mathscr{F}_l$, rows indexed by $\mathscr{F}_l^*$, and the entry in column $F$ and row $F'$ is $(-1)^{|\pi|-1}(|\pi|-1)!$ if $F'=F_\pi$ for some $\pi$ partition of $[l]$, and $0$ otherwise. Then $K=EP$. If $u\in \mathscr{R}^{\mathscr{F}_l^*}$ is the vector with entries $\kappa_G(J)$, and $w \in \mathscr{R}^{\mathscr{F}_l}$ is the vector with entries $i(F,G):F \in \mathscr{F}_l$, then $u=Kw$. Thus, we need to show that the columns of $K$ are independent, and we will do this by showing that $E$ is invertible, and that the columns of $P$ are independent.

First, let us look at $E$. Its entries are $x(F,J)$, which is zero if it is not possible for the edges of $F$ to map to $J$. In particular, $x(F,J)$ is zero if $J$ has more non-isolated vertices than $F$, or if the number of non-isolated vertices is equal and the two graphs are not isomorphic (taking the edge labels into consideration). However, it is easy to see that the diagonal entries $x(F,F)$ are nonzero. Thus, if we order $\mathscr{F}^*_l$ such that the number of non-isolated vertices is increasing, then the matrix will have nonzero entries in the diagonal, and zero entries above the diagonal. Thus, $E$ is invertible. To see that $P$ has independent columns, we can just restrict to the rows in $\mathscr{F}_l$, and see that we obtain the identity matrix. So we have shown that $P$ has independent columns, $E$ is invertible, and so $K=EP$ has independent columns. Thus, if two graphs have different values of $i(F,G)$ for some $F \in \mathscr{F}_l$, then at least one of their $l$-th partial derivatives must be different.

\end{proof}

From the proposition, we easily obtain the following corollary:

\begin{corollary} \label{leftpartequiv}
A sequence of graphs $(G_n)$ is left convergent if and only if for any $k$, all partial derivatives of the functions $f_{G_n,k}$ at $0$ converge.
\end{corollary}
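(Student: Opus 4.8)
The plan is to deduce Corollary \ref{leftpartequiv} from Proposition \ref{indep} together with the observation, made just before the proposition, that left convergence is equivalent to the existence of $\lim_n i(F,G_n)/v(G_n)$ for every $F\in\mathscr{F}_{l'}$ and every $l'>0$. So the corollary is essentially a packaging statement: convergence of all the numbers $i(F,G_n)/v(G_n)$ is equivalent to convergence of all partial derivatives of all the $f_{G_n,k}$ at the origin.

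First I would prove the forward direction. Assume $(G_n)$ is left convergent, fix $k$ and fix an $l$-th partial derivative of $f_{G_n,k}$. By the first part of Proposition \ref{indep}, the value of this partial derivative at the origin is a fixed (graph-independent) linear combination of the numbers $i(F,G_n)/v(G_n)$ over $F\in\bigcup_{l'\le l}\mathscr{F}_{l'}$. Since this is a finite set of graphs $F$, and each $i(F,G_n)/v(G_n)$ converges by left convergence, the linear combination converges as well. Hence every partial derivative of every $f_{G_n,k}$ converges.

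For the converse, assume all partial derivatives of all $f_{G_n,k}$ at $0$ converge. Fix $l>0$ and a connected multigraph $F$ with $l$ (labeled) edges; I want to show $i(F,G_n)/v(G_n)$ converges. Choose $k\ge 2l$. By the second part of Proposition \ref{indep}, for this $k$ the map sending the vector of values $(i(F',G)/v(G))_{F'\in\mathscr{F}_l}$ to the vector of $l$-th partial derivatives at the origin is injective; concretely, it is given by the matrix $K$ of that proof, whose columns are independent, so $K$ has a left inverse. Therefore each $i(F,G_n)/v(G_n)$ is a fixed linear combination of finitely many $l$-th partial derivatives of $f_{G_n,k}$ at $0$, each of which converges by hypothesis; hence $i(F,G_n)/v(G_n)$ converges. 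Since this holds for every $l$ and every connected $F$ with $l$ edges, $(G_n)$ is left convergent by the equivalence recorded above. I should be mildly careful to cover the case where $F$ has fewer than $l$ edges or where some of the relevant variables in the partial derivative correspond to vertices rather than edges, but the proof of Proposition \ref{indep} already handles the vertex case, and graphs with fewer edges are captured by taking $l$ large enough.

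The only genuinely substantive point — and the one I expect to be the main obstacle to state cleanly rather than to prove — is the invertibility used in the converse direction: one must know not merely that the partial derivatives are \emph{some} linear functions of the $i(F,G)/v(G)$, but that this linear map can be inverted with coefficients independent of $n$. This is exactly the content of the "bijection" clause of Proposition \ref{indep} (the argument there that $K=EP$ with $E$ invertible and $P$ of full column rank), so once that proposition is in hand the corollary follows with no further work beyond assembling these two directions.
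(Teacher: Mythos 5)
Your argument is correct and is exactly the intended deduction: the paper states the corollary as an immediate consequence of Proposition \ref{indep} combined with the earlier observation that left convergence is equivalent to convergence of $i(F,G_n)/v(G_n)$, and your two directions (graph-independent linear combinations for one way, the left-invertibility of $K$ from the bijection clause with $k\ge 2l$ for the other) are precisely how that deduction goes.
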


We are now ready to prove our main theorem.
\begin{proof}[Proof of Theorem \ref{main}] First, we show that $(1)$ implies $(2)$. By Proposition \ref{convdom}, if we are within the domain  $\| \lambda \|_\infty <(4eD)^{-1}$, then the Taylor series of $f_{G,k}$ converges uniformly across all graphs $G$ with bounded maximal degree $D$. Since the coefficients of the Taylor series are the partial derivatives (multiplied by fixed constants), this immediately implies that if a sequence $(G_n)$ is left convergent, then the values $f_{G,k}(\lambda)=\frac{\log t(G_n,H_\lambda)}{v(G_n)}$ converge if $\|\lambda\|_\infty<(4eD)^{-1}$. Since it is clear that $(2)$ implies $(3)$, it remains to show that $(3)$ implies $(1)$. Since there is a universal bound on the partial derivatives, the Taylor series converges to the function uniformly on an open neighborhood of the origin. Since the terms up to order $k$ approximate the function with error $o(\|\lambda\|_\infty^k)$, if the functions converge on an open neighborhood, the partial derivatives converge. By Corollary \ref{leftpartequiv}, this implies left convergence.
\end{proof}

\section{Concluding Remarks}

We remark that it also makes sense to define left and right convergence for sequences of bounded degree graphs with parallel edges. There are a few different ways to extend the definition of $\hom, \inj$ and $\ind$, however they will produce equivalent notions of left convergence. The notion will also still be equivalent to convergence of $i(F,G_n)/v(G_n)$ for $F \in \mathscr{F}_l$. The definition of $\hom(G,H)$ if $H$ is weighted extends naturally to the case when $G$ has parallel edges, and so does the definition of right convergence, and our proof will still work.

\section*{Acknowledgements}
The author would like to thank Dan Kr\'al' for insightful conversations and for helpful comments, and Jacob Fox for comments on the manuscript.

\bibliography{ref}

\bibliographystyle{amsplain}

\end{document}